\author{Dilip Raghavan}
\address{Department of Mathematics \\
National University of Singapore\\
Singapore 119076}
\email{raghavan@math.nus.edu.sg}
\urladdr{http://www.math.toronto.edu/raghavan}
\date{\today}
\subjclass[2010]{03E05, 03E17, 03E65}
\keywords{Tukey type, ultrafilter on $\omega$}
\title{The generic ultrafilter added by ${\left(\FIN \times \FIN\right)}^{+}$}
\def\polhk#1{\setbox0=\hbox{#1}{\ooalign{\hidewidth
    \lower1.5ex\hbox{`}\hidewidth\crcr\unhbox0}}}
\newtheorem{Theorem}{Theorem}
\newtheorem{Lemma}[Theorem]{Lemma}
\theoremstyle{definition}
\newtheorem{Def}[Theorem]{Definition}
\theoremstyle{remark}
\newcommand{\forces}{\Vdash}
\newcommand{\restrict}{\upharpoonright}
\renewcommand{\c}{\mathfrak{c}}
\newcommand{\h}{\mathfrak{h}}
\renewcommand{\[}{\left[}
\renewcommand{\]}{\right]}
\renewcommand{\P}{\mathbb{P}}
\newcommand{\Q}{\mathbb{Q}}
\newcommand{\lc}{\left|}
\newcommand{\rc}{\right|}
\newcommand\FIN{\mathrm{FIN}}
\DeclareMathOperator{\dom}{dom}
\newcommand{\Pset}{\mathcal{P}}
\newcommand{\BB}{\mathcal{B}}
\newcommand{\A}{{\mathscr{A}}}
\newcommand{\E}{{\mathcal{E}}}
\newcommand{\U}{{\mathcal{U}}}
\newcommand{\XX}{{\mathcal{X}}}
\newcommand{\VV}{{\mathcal{V}}}
\newcommand{\cube}{{\[\omega\]}^{\omega}}
\newcommand{\I}{{\mathcal{I}}}
\newcommand{\V}{{\mathbf{V}}}
\newcommand{\VP}{{\mathbf{V}}^{\P}}
\begin{document}
\begin{abstract}
We investigate the Tukey type of the generic ultrafilter added by the quotient $\Pset(\omega \times \omega) / \left( \FIN \times \FIN \right)$.
We prove that this ultrafilter is not basically generated and yet does not have the maximal Tukey type among direct partial orders of size continuum.
Moreover, any Tukey reduction from this ultrafilter to any other ultrafilter is witnessed by a Baire class one map.
\end{abstract}
\maketitle
\section{Introduction} \label{sec:intro}
The purpose of this short note is to analyze the Tukey type of the generic ultrafilter added by $\Pset(\omega \times \omega) / \left( \FIN \times \FIN \right)$.
Tukey types of ultrafilters (on $\omega$) in general were studied in \cite{dt} and \cite{tukey}.
A particular notion that was analyzed in both these papers is the notion of a basically generated ultrafilter.
This is a property of ultrafilters which guarantees that they are not of the maximal Tukey type (in fact it guarantees that $\langle {\[{\omega}_{1}\]}^{< \omega}, \subset \rangle$ is not Tukey below the ultrafilter).
Moreover it was proved in \cite{tukey} that any monotone map on a basically generated ultrafilter has a nice canonical form that allows its essential features to be captured by a Baire class one map.

In this note we show that the generic ultrafilter added by $\Pset(\omega \times \omega) / \left( \FIN \times \FIN \right)$ is not basically generated and yet does not have maximal Tukey type (Theorems \ref{thm:notbasicallygenerated} and \ref{thm:nottop}).
This is the first known example of such an ultrafilter.
Theorems \ref{thm:nicemaps} and \ref{thm:canonical} provide an exact analogue for this generic ultrafilter of Theorem 17 from \cite{tukey}.
They show that any monotone map on the generic ultrafilter has a nice canonical form.
In particular, there are only $\c$ many ultrafilters Tukey below the generic one.
A noteworthy feature of our results is that they do not require any hypothesis on the ground model.   
\section{Notation} \label{sec:notation}
Let $\I$ denote $\FIN \times \FIN$.
Let $\P$ be ${\I}^{+}$.
Then $\P$ is countably closed and adds a generic ultrafilter $\mathring{\U}$.
If $\U$ is $(\V, \P)$-generic, then in $\V\[\U\]$, $\lc {\c}^{\V} \rc = \lc {\c}^{\V\[\U\]} \rc$.
In particular, $\left\langle {\[{\c}^{\V}\]}^{< \omega}, \subset \right\rangle \; {\equiv}_{T} \; \left\langle {\[{\c}^{\V\[\U\]}\]}^{< \omega}, \subset  \right\rangle$.

For $p \subset \omega \times \omega$ and $n \in \omega$ $p(n) = \{m \in \omega: \langle n, m \rangle \in p\}$.
For $x \in {}^{\omega}{\left( \Pset(\omega) \right)}$ and $a \subset \omega$, $x \restrict a = \{\langle n, m \rangle \in \omega \times \omega: n \in a \ \text{and} \ m \in x(n)\}$.
We will sometimes abuse notation and write $x$ for $x \restrict \omega$.
For $p \in \P$, put $\dom(p) = \{n \in \omega: p(n) \neq 0 \}$.
Let us say that $p \in {\I}^{+}$ is \emph{standard} if $\forall n \in \dom(p)\[\lc p(n) \rc = \omega\]$.
It is clear that for every $p \in \P$ there is $q \subset p$ which is standard.  
\begin{Def} \label{def:cofinalandbase}
	Let $\U$ be an ultrafilter on $\omega$. A set $\BB \subset \U$ is said to be a \emph{filter base} for $\U$ if $\forall a \in \U \exists b \in \BB\[b \subset a\]$ and if $\forall {b}_{0}, {b}_{1} \in \BB\[\; {b}_{0} \cap {b}_{1} \in \BB\]$.  
\end{Def}
\begin{Def} \label{def:basicallygenerated}
	Let $\U$ be an ultrafilter on $\omega$. We say that $\U$ is \emph{basically generated} if there is a filter base $\BB \subset \U$ with the property that for every $\langle {b}_{n}: n \in \omega \rangle \subset \BB$ and $b \in \BB$, if $\langle {b}_{n}: n \in \omega \rangle$ converges to $b$ (with respect to the usual topology on $\Pset(\omega)$), then there exists $X \in \cube$ such that ${\bigcap}_{n \in X}{{b}_{n}} \in \U$. 
\end{Def}
We will be dealing with ultrafilters on $\omega \times \omega$.
Definitions \ref{def:cofinalandbase} and \ref{def:basicallygenerated} apply to such ultrafilters too with the obvious modifications.

Let $a \in \cube$.
If $\A \subset \Pset(a)$, $\I(\A)$ denotes the ideal on $a$ generated by $\A$ together with the Frechet filter.
\section{The Results} \label{sec:results}
We first give a direct argument that $\mathring{\U}$ is not of the maximal Tukey type.
After that we show that monotone maps defined on $\mathring{\U}$ can always be ``captured'' by Baire class one maps.
\begin{Theorem} \label{thm:nottop}
Let $\U$ be $(\V, \P)$-generic.
Then in $\V\[\U\]$ $\left\langle {\[{\c}^{\V\[\U\]}\]}^{< \omega}, \subset  \right\rangle \; {\not\leq}_{T} \; \U$.
\begin{proof}
Suppose not.
Working in $\V$ find $\{{\mathring{x}}_{\alpha}: \alpha < {\c}^{\V}\} \subset {\V}^{\P}$ such that for each $\alpha < {\c}^{\V}$, $\forces {{\mathring{x}}_{\alpha} \in \mathring{\U}}$, and also a standard ${p}_{0} \in \P$ such that for any $X \in {\[{\c}^{\V}\]}^{\omega}$, ${p}_{0} \forces {\bigcap}_{\alpha \in X}{{\mathring{x}}_{\alpha} \notin \mathring{\U}}$.
Let $\langle {p}_{\alpha}: \alpha < {\c}^{\V} \rangle$ be an enumeration of $\{p \in \P: p \leq {p}_{0}\}$ such that each element of it occurs cofinally often.
For each $\alpha < {\c}^{\V}$ choose a standard ${q}_{\alpha} \in \P$ and ${x}_{\alpha} \in \P$ such that ${q}_{\alpha} \subset {p}_{\alpha} \cap {p}_{0} \cap {x}_{\alpha}$ and ${q}_{\alpha} \; \forces \; {\mathring{x}}_{\alpha} = {x}_{\alpha}$.
Suppose for a moment that we can find $X \in {\[{\c}^{\V}\]}^{\omega}$ such that ${\bigcap}_{\alpha \in X}{{q}_{\alpha}} \in \P$.
Then putting $q = {\bigcap}_{\alpha \in X}{{q}_{\alpha}}$, it is clear that $q \forces {\bigcap}_{\alpha \in X}{{\mathring{x}}_{\alpha}} \in \mathring{\U}$, which is a contradiction as $q \leq {p}_{0}$.

To find such $X$, let $\E$ be $(\V, \Pset(\omega) / \FIN)$-generic with $\dom({p}_{0}) \in \E$.
In $\V\[\E\]$ consider the poset $\Q = {\left( \Pset(\omega) / \FIN \right)}^{\omega}$.
Define ${x}_{0} \in \Q$ as follows.
For any $n \in \dom({p}_{0})$, ${x}_{0}(n) = {p}_{0}(n)$.
For any $n \notin \dom({p}_{0})$, ${x}_{0}(n) = \omega$.
Let $G$ be $(\V\[\E\], \Q)$-generic with ${x}_{0} \in G$.
In $\V[\E][G]$ define for each $n \in \omega$, ${\VV}_{n} = \{x(n): x \in G\}$.
It is clear that $\E$ and each ${\VV}_{n}$ are selective ultrafilters in $\V\[\E\]\[G\]$.
Put $\VV = {\bigotimes}_{\E}{{\VV}_{n}}$.
We claim that for each $\alpha < {\c}^{\V}$, there is ${\c}^{\V} > \beta \geq \alpha$ such that ${q}_{\beta} \in \VV$.
Note that this is sufficient to find $X \in \V \cap {\[{\c}^{\V}\]}^{\omega}$ such that ${\bigcap}_{\alpha \in X}{{q}_{\alpha}} \in \P$.
This is because in $\V\[\E\]\[G\]$ there will be $X \in {\[{\c}^{\V}\]}^{\omega}$ such that ${\bigcap}_{\alpha \in X}{{q}_{\alpha}} \in \VV \subset \P$ (this is because $\VV$ is basically generated; see Lemma \ref{lem:bv} below).
And since no new countable sets of ordinals were added $X \in \V \cap {\[{\c}^{\V}\]}^{\omega}$.

In order to prove the claim, fix $\alpha < {\c}^{\V}$.
Working in $\V\[\E\]$ define 
\begin{align*}
	{D}_{\alpha} = \{y \in \Q: \exists \beta \geq \alpha \exists a \in \E \[\dom({q}_{\beta}) = a \ \text{and} \ {q}_{\beta} = y \restrict a\]\}.
\end{align*}  
Let us check that ${D}_{\alpha}$ is dense below ${x}_{0}$.
Fix $x \leq {x}_{0}$.
Note that $x \in \V$.
Working in $\V$, define $D(\alpha, x) = \{\dom({q}_{\beta}): \beta \geq \alpha \ \text{and} \ {q}_{\beta} \subset x \restrict \omega\}$.
We claim that $D(\alpha, x)$ is dense below $\dom({p}_{0})$ in $\Pset(\omega) / \FIN$.
Fix $a \in {\[\dom({p}_{0})\]}^{\omega}$.
Note that $x \restrict a \in \P$ and that $x \restrict a \leq {p}_{0}$.
So there exists $\beta \geq \alpha$ such that ${q}_{\beta} \subset x \restrict a \subset x \restrict \omega$.
It is clear that $\dom({q}_{\beta}) \subset a$ and is as needed.
Now, back in $\V\[\E\]$, this means that there is some $a \in \E$ and $\beta \geq \alpha$ such that $a = \dom({q}_{\beta})$ and ${q}_{\beta} \subset x \restrict \omega$.
Define $y \in \Q$ as follows.
If $n \in a$, then $y(n) = {q}_{\beta}(n)$ and if $n \notin a$, then $y(n) = x(n)$.
It is clear that $y \leq x$ and is as needed.
Therefore, in $\V\[\E\]\[G\]$, there exists $y \in G$, $\beta \geq \alpha$ and $a \in \E$ such that $\dom({q}_{\beta}) = a$ and $y \restrict a = {q}_{\beta}$.
But this means that ${q}_{\beta} \in \VV$ and we are done.
\end{proof}
\end{Theorem}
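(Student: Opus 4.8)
The plan is to argue in $\V$ by contradiction, exploiting the countable closure of $\P={\I}^{+}$. Assume $\left\langle {[{\c}^{\V[\U]}]}^{<\omega},\subset\right\rangle{\leq}_{T}\U$ in $\V[\U]$. First I would peel off the combinatorial content: fix $\P$-names $\{{\mathring{x}}_{\alpha}:\alpha<{\c}^{\V}\}$, each forced into $\mathring{\U}$, and a standard ${p}_{0}\in\P$ forcing ${\bigcap}_{\alpha\in X}{{\mathring{x}}_{\alpha}}\notin\mathring{\U}$ for every countable $X\subset{\c}^{\V}$. Enumerate $\{p\in\P:p\leq{p}_{0}\}$ as $\langle{p}_{\alpha}:\alpha<{\c}^{\V}\rangle$ with every condition appearing cofinally, and for each $\alpha$ pick standard ${q}_{\alpha}\in\P$ and ${x}_{\alpha}\in\P$ with ${q}_{\alpha}\subset{p}_{\alpha}\cap{p}_{0}\cap{x}_{\alpha}$ and ${q}_{\alpha}\forces{\mathring{x}}_{\alpha}={x}_{\alpha}$. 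Then the whole assumption reduces to a single demand: \emph{there is an infinite $X\subset{\c}^{\V}$ with ${\bigcap}_{\alpha\in X}{{q}_{\alpha}}\in{\I}^{+}$} --- for then $q={\bigcap}_{\alpha\in X}{{q}_{\alpha}}$ is a condition below ${p}_{0}$ forcing ${\bigcap}_{\alpha\in X}{{\mathring{x}}_{\alpha}}={\bigcap}_{\alpha\in X}{{x}_{\alpha}}\in\mathring{\U}$ (as it contains $q$), contradicting the choice of ${p}_{0}$. I expect producing this $X$ to be the entire difficulty: the ${q}_{\alpha}$ are arbitrary conditions with no convergence behaviour, so there is no hope of diagonalizing them inside $\V$, and one must bring in external structure.

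The structure I would import is a \emph{basically generated} ultrafilter on $\omega\times\omega$ refining ${\I}^{+}$, built in an intermediate extension that adds no new $\omega$-sequences of ordinals. Concretely: first force with $\Pset(\omega)/\FIN$ to get a selective ultrafilter $\E$ with $\dom({p}_{0})\in\E$; then, over $\V[\E]$, force with the countably closed full-support power ${(\Pset(\omega)/\FIN)}^{\omega}$ below the condition reading off ${p}_{0}$ column by column, obtaining selective ultrafilters ${\VV}_{n}$ on the columns with ${p}_{0}(n)\in{\VV}_{n}$ for $n\in\dom({p}_{0})$; and set $\VV={\bigotimes}_{\E}{{\VV}_{n}}$. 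Then $\VV\subset{\I}^{+}$, and the whole point of this choice is that $\VV$ is basically generated, a fact I would isolate as a separate lemma (a Fubini product of selective ultrafilters over a selective index ultrafilter is basically generated). Basic generation is precisely what converts a sequence converging inside $\VV$ into an infinite subfamily whose intersection is still in $\VV$, hence still in ${\I}^{+}$.

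What remains is to see that $\VV$ catches cofinally many of the ${q}_{\alpha}$: for each $\alpha<{\c}^{\V}$ there is $\beta$ with ${\c}^{\V}>\beta\geq\alpha$ and ${q}_{\beta}\in\VV$. This is a two-layer density argument. In $\V$, for any $x\leq{p}_{0}$ the set $\{\dom({q}_{\beta}):\beta\geq\alpha\text{ and }{q}_{\beta}\subset x\restrict\omega\}$ is dense below $\dom({p}_{0})$ in $\Pset(\omega)/\FIN$, since any infinite $a\subset\dom({p}_{0})$ yields $x\restrict a\leq{p}_{0}$, which by cofinal repetition is refined by some ${q}_{\beta}$ with $\beta\geq\alpha$, and then $\dom({q}_{\beta})\subset a$. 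Transporting this through the generic $\E$: for each $x\leq{p}_{0}$ in $\V[\E]$ there are $a\in\E$ and $\beta\geq\alpha$ with $a=\dom({q}_{\beta})$ and ${q}_{\beta}\subset x\restrict\omega$, so redefining $x$ on $a$ to coincide with ${q}_{\beta}$ gives a condition below $x$ in ${(\Pset(\omega)/\FIN)}^{\omega}$ forcing ${q}_{\beta}\in\VV$; by genericity some such ${q}_{\beta}$ is in $\VV$. Together with basic generation of $\VV$, this produces in the final extension an infinite $X\subset{\c}^{\V}$ with ${\bigcap}_{\alpha\in X}{{q}_{\alpha}}\in\VV\subset\P$, and since no new countable sequences of ordinals were added, $X\in\V$, which is the contradiction. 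The two points needing care, I anticipate, are the faithful transfer of the ground-model density statement across $\E$ (bookkeeping of $\dom$ and of the ``standard'' normalization of the ${q}_{\alpha}$) and the verification that $\E$ and the ${\VV}_{n}$ are genuinely selective in the two-step extension; the genuine mathematical weight rests entirely on the assertion that ${\bigotimes}_{\E}{{\VV}_{n}}$ is basically generated.
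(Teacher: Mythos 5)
Your proposal is correct and follows essentially the same route as the paper: the same reduction to finding an infinite $X$ with ${\bigcap}_{\alpha\in X}{{q}_{\alpha}}\in{\I}^{+}$, the same two-step $\sigma$-closed extension by $\Pset(\omega)/\FIN$ and then ${\left(\Pset(\omega)/\FIN\right)}^{\omega}$ producing $\VV={\bigotimes}_{\E}{{\VV}_{n}}$, the same appeal to basic generation of $\VV$ (the paper's Lemma \ref{lem:bv}), and the same two-layer density argument to catch cofinally many ${q}_{\beta}$ in $\VV$.
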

We remark that our argument does not show that in $\V\[\U\]$, $\langle {\[{\omega}_{1}\]}^{< \omega}, \subset \rangle \; {\not\leq}_{T} \; \U$.
However, it is easy to see that if in $\V$, $\h(\Pset(\omega \times \omega) / \left( \FIN \times \FIN \right)) > {\omega}_{1}$, then in $\V\[\U\]$, $\langle {\[{\omega}_{1}\]}^{< \omega}, \subset \rangle \; {\not\leq}_{T} \; \U$ holds.
We do not know if it is possible to prove this in general.

Let $\XX \subset \Pset(\omega)$.
Recall that a map $\phi:\XX \rightarrow \Pset(\omega)$ is said to be \emph{monotone} if $\forall a, b \in \XX \[b \subset a \implies \phi(b) \subset \phi(a)\]$.
Such a map is said to be \emph{non-zero} if $\forall a \in \XX\[\phi(a) \neq 0 \]$.

Next we will show that any monotone maps defined on $\mathring{\U}$ has a ``nice'' canonical form similar to what is obtained in Section 4 of \cite{tukey}.
This will imply that if $\U$ is $(\V, \P)$-generic, then in $\V\[\U\]$ there are only $\c$ many ultrafilters that are Tukey below $\U$.
This gives another, albeit less direct, proof that $\U$ is not of the maximal cofinal type for directed sets of size continuum.
The proof will go through the corresponding result for Fubini products of selective ultrafilters.
Recall the following definitions and results which appear in \cite{tukey}.
\begin{Def}\label{def:psi}
	Let $\XX \subset \Pset(\omega)$ and let $\phi: \XX \rightarrow \Pset(\omega)$. Define ${\psi}_{\phi}: \Pset(\omega) \rightarrow \Pset(\omega)$ by ${\psi}_{\phi}(a) = \{k \in \omega: \forall b \in \XX\[a \subset b \implies k \in \phi(b)\] \} = \bigcap\{\phi(b): b \in \XX \wedge a \subset b\}$, for each $a \in \Pset(\omega)$.  
\end{Def}
\begin{Lemma} [Lemma 16 of \cite{tukey}] \label{lem:findetermines}
	Let $\U$ be basically generated by $\BB \subset \U$. Let $\phi: \BB \rightarrow \Pset(\omega)$ be a monotone map such that $\phi(b) \neq 0$ for every $b \in \BB$. Let $\psi = {\psi}_{\phi}$. Then for every $b \in \BB$, ${\bigcup}_{s \in {\[b\]}^{< \omega}}{\psi(s)} \neq 0$. 
\end{Lemma}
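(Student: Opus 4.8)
The plan is to proceed by contradiction. Fix $b \in \BB$ and suppose that $\psi(s) = \emptyset$ for every finite $s \subseteq b$. From this I will build a sequence $\langle b_n : n \in \omega \rangle \subseteq \BB$ converging to $b$ for which $\bigcap_{n \in X} b_n \notin \U$ for every infinite $X \subseteq \omega$, directly contradicting the fact that $\BB$ witnesses that $\U$ is basically generated (Definition~\ref{def:basicallygenerated}).

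Before the construction I would record two preliminary observations, both exploiting monotonicity of $\phi$ together with the fact that $\BB$ is closed under finite intersections (Definition~\ref{def:cofinalandbase}). First, in the intersection defining $\psi(s)$ for $s \subseteq b$ it suffices to range over the $b' \in \BB$ with $s \subseteq b' \subseteq b$: given any $b' \in \BB$ with $s \subseteq b'$, the set $b' \cap b$ lies in $\BB$, satisfies $s \subseteq b' \cap b \subseteq b$, and $\phi(b' \cap b) \subseteq \phi(b')$. So, writing $\F_s = \{\phi(b') : b' \in \BB \text{ and } s \subseteq b' \subseteq b\}$, we have $\bigcap \F_s = \psi(s) = \emptyset$. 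Second, $\F_s$ is downward directed under inclusion: if $b', b'' \in \BB$ with $s \subseteq b', b'' \subseteq b$, then $b' \cap b'' \in \BB$, $s \subseteq b' \cap b'' \subseteq b$, and $\phi(b' \cap b'') \subseteq \phi(b') \cap \phi(b'')$. Combining the two observations, the emptiness of $\bigcap \F_s$ upgrades, via finitely many intersections, to the statement that for every $k \in \omega$ there is $b' \in \BB$ with $s \subseteq b' \subseteq b$ and $\phi(b') \cap \{0, 1, \dots, k\} = \emptyset$.

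Now for the construction. The set $b$ is infinite (as $\U$ is a free ultrafilter); enumerate it increasingly as $\{m_0 < m_1 < \cdots\}$ and put $s_n = \{m_0, \dots, m_{n-1}\}$. Applying the upgraded statement to $s = s_n$ and $k = n$, choose $b_n \in \BB$ with $s_n \subseteq b_n \subseteq b$ and $\phi(b_n) \cap \{0, 1, \dots, n\} = \emptyset$. The sequence $\langle b_n : n \in \omega \rangle$ converges to $b$: no $b_n$ meets $\omega \setminus b$, and each $m_j \in b$ lies in $s_n \subseteq b_n$ for all $n > j$. Since $b_n \in \BB$ for all $n$ and $b \in \BB$, Definition~\ref{def:basicallygenerated} yields an infinite $X \subseteq \omega$ with $c := \bigcap_{n \in X} b_n \in \U$. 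Choose $b'' \in \BB$ with $b'' \subseteq c$, and choose $k \in \phi(b'')$, possible since $\phi(b'') \neq \emptyset$. As $b'' \subseteq c \subseteq b_n$ for every $n \in X$, monotonicity gives $k \in \phi(b_n)$ for all $n \in X$; but choosing $n \in X$ with $n \geq k$ we get $k \in \{0, \dots, n\}$, so $\phi(b_n) \cap \{0, \dots, n\} = \emptyset$ forces $k \notin \phi(b_n)$ --- a contradiction, which completes the proof.

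The step I expect to be the crux is the pair of normalizations in the second paragraph. Without first replacing $b'$ by $b' \cap b$ there is no reason the $b_n$ lie below $b$, and then the sequence need not converge to $b$, so Definition~\ref{def:basicallygenerated} would not apply; and without the downward directedness of $\F_s$ one extracts, for each fixed $k$, only a single $b'$ omitting $k$, which is too weak --- the infinite set $X$ returned by basic generation is not under our control, so we must arrange in advance that $\phi(b_n)$ misses the \emph{entire} initial segment $\{0, \dots, n\}$, which is precisely what makes the final contradiction go through no matter which $X$ appears.
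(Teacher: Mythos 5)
Your proof is correct. The paper cites this result from \cite{tukey} without reproving it, but your argument---normalizing to $b' \subseteq b$ via closure of $\BB$ under finite intersections, using downward directedness to obtain $b_n \in \BB$ with $s_n \subseteq b_n \subseteq b$ and $\phi(b_n) \cap \{0,\dots,n\} = \emptyset$, and then deriving a contradiction from basic generation applied to the sequence converging to $b$---is the natural proof and establishes the lemma cleanly.
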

Once again, Definition \ref{def:psi} and Lemma \ref{lem:findetermines} apply to $\Pset(\omega \times \omega)$ with the obvious modifications.

Let $\E$ and $\langle {\VV}_{n}: n \in \omega \rangle$ be selective ultrafilters.
Put $\VV = {\bigotimes}_{\E}{{\VV}_{n}}$.
Consider ${\BB}_{\VV} = \{b \subset \omega \times \omega: \dom(b) \in \E \ \text{and} \ \forall n \in \dom(b)\[b(n) \in {\VV}_{n}\]\}$.
Then the following is easy to prove.
For a more general statement see \cite{tukey}.
\begin{Lemma} \label{lem:bv}
$\VV$ is basically generated by ${\BB}_{\VV}$. 
\end{Lemma}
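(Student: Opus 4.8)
The plan is to check two things: that $\BB_{\VV}$ is a filter base for $\VV$, and that it has the convergence property of Definition \ref{def:basicallygenerated}. The first is routine. The family $\BB_{\VV}$ is closed under intersections because $\E$ and each $\VV_n$ is a filter: if $b_0,b_1\in\BB_{\VV}$, then $\dom(b_0\cap b_1)=\dom(b_0)\cap\dom(b_1)\in\E$ and $(b_0\cap b_1)(n)=b_0(n)\cap b_1(n)\in\VV_n$ for $n$ in this domain. And $\BB_{\VV}$ is cofinal in $\VV$: given $A\in\VV$, the set $a=\{n:A(n)\in\VV_n\}$ lies in $\E$, and the $b$ with $\dom(b)=a$ and $b(n)=A(n)$ for $n\in a$ lies in $\BB_{\VV}$ and is $\subset A$.

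The engine for the convergence property is the observation that any P\nobreakdash-point $\U$ is basically generated by $\U$ itself. Indeed, given $\langle c_k:k\in\omega\rangle\subset\U$ converging to $c\in\U$, replace $c_k$ by $c_k\cap c$ (which only shrinks $\bigcap_{k\in X}c_k$ for every $X$ and still converges to $c$) to assume $c_k\subset c$, and put $d_k=c\setminus c_k\notin\U$, so that $\bigcap_{k\in X}c_k=c\setminus\bigcup_{k\in X}d_k$ and convergence means $\min(d_k)\to\infty$. If $d_k=\emptyset$ for infinitely many $k$, let $X$ be that set. Otherwise, using that $\U$ is a P\nobreakdash-point, fix $c'\in\U$ almost disjoint from every $d_k$ and pass to intersections with $c'$ to make each $d_k$ finite; pass to a subsequence so that $\max(d_k)<\min(d_{k+1})$; if $\bigcup_k d_k\notin\U$ take $X=\omega$, and otherwise the map sending $m$ to the index $k$ with $m\in d_k$ carries $\U\restriction\bigcup_k d_k$ onto a non\nobreakdash-principal ultrafilter on $\omega$, which contains exactly one of the two sets of indices of a given parity; whichever parity it \emph{omits}, the corresponding subsequence of $\langle k_i\rangle$ serves as $X$, since then $\bigcup_{k\in X}d_k\notin\U$.

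Now let $\langle b_k:k\in\omega\rangle\subset\BB_{\VV}$ converge to $b\in\BB_{\VV}$. Replacing $b_k$ by $b_k\cap b$ (still in $\BB_{\VV}$, still converging to $b$, and shrinking every $\bigcap_{k\in X}b_k$) I may assume $b_k\subset b$; write $d_k=b\setminus b_k$, so $\bigcap_{k\in X}b_k=b\setminus\bigcup_{k\in X}d_k$ for every $X$. Each $n\in\dom(b)$ lies in $\dom(b_k)$ for all large $k$ (pick $m\in b(n)$ and use $\langle n,m\rangle\in b_k$ eventually), so the sequence $\langle\dom(b)\setminus\dom(b_k):k\in\omega\rangle$ converges to $\emptyset$; applying the engine to the P\nobreakdash-point $\E$ and $\langle\dom(b_k)\rangle$ and passing to the corresponding subsequence of $\langle b_k\rangle$ produces an $\E$\nobreakdash-large $e_{\infty}$ with $e_{\infty}\subset\dom(b_k)$ for all $k$. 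Restricting every $b_k$ and $b$ to fibres over $e_{\infty}$, I may assume $\dom(b_k)=\dom(b)\in\E$ for all $k$.

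What remains is to choose one infinite $X$ so that for $\E$\nobreakdash-many $n$ one has $\bigcap_{k\in X}b_k(n)\in\VV_n$, i.e.\ $\bigcup_{k\in X}d_k(n)\notin\VV_n$; this is the genuinely delicate point, and is where selectivity of $\E$ (not just its being a P\nobreakdash-point) enters. For each fixed $n$ this is exactly the engine applied to the selective ultrafilter $\VV_n$, and the task is to make one $X$ work at $\E$\nobreakdash-many coordinates at once. I would do this by a fusion: having built $k_0<\dots<k_{i-1}$ and an $\E$\nobreakdash-large $A_{i-1}\subset\dom(b)$ along which the fibres $d_{k_0}(n),\dots,d_{k_{i-1}}(n)$ are already pairwise disjoint, use the Ramsey property of $\E$ to thin $A_{i-1}$ to an $A_i\in\E$ on which the stabilisation time controlling the next block becomes usable, and choose $k_i$ accordingly, keeping $A_i\subset\dom(b_{k_i})$; passing to a pseudo\nobreakdash-intersection $A_{\infty}\in\E$ of the $A_i$'s, for every $n\in A_{\infty}$ the fibres $\langle d_{k_i}(n):i\in\omega\rangle$ are eventually pairwise disjoint, so the which\nobreakdash-block map behaves in $\VV_n$ as in the engine, and a \emph{uniform} choice of parity works — for each such $n$ at least one of $\bigcup_{i\ \mathrm{even}}d_{k_i}(n)$, $\bigcup_{i\ \mathrm{odd}}d_{k_i}(n)$ is outside $\VV_n$, and the set of $n$ for which it is the even one is in $\E$ or has complement in $\E$, so the even\nobreakdash- or odd\nobreakdash-indexed $k_i$'s are the desired $X$. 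The hard part is precisely the bookkeeping in this fusion, namely arranging simultaneous disjointness of fibres along a single $\E$\nobreakdash-large set of coordinates using only the selectivity of $\E$; the general combinatorial framework for this is the one in \cite{tukey}, to which I would refer for the details of the more general statement.
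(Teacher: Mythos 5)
The paper itself offers no proof of this lemma; it simply declares the statement ``easy to prove'' and refers to \cite{tukey} for a more general result. So there is no internal argument to compare yours against, only the question of whether your sketch actually closes the gap.

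Your first three moves are sound: the filter-base check, the observation that every P\nobreakdash-point is basically generated by itself, and the domain reduction (pass to a subsequence of $\langle b_k\rangle$ along which the domains all contain a fixed $e_\infty\in\E$, then restrict). The genuine gap is in the final ``fusion,'' which you leave explicitly unfinished, and there are two concrete problems with what is sketched. First, the disjointness you want to propagate through the fusion is not obtainable from convergence alone: $d_{k_i}(n)=b(n)\setminus b_{k_i}(n)$ may be an infinite set, and $\min d_k(n)\to\infty$ does not make two infinite defect sets disjoint. You must first shrink each fibre: for each $n\in e_\infty$ use the P\nobreakdash-point property of $\VV_n$ to choose $c_n\in\VV_n$ with $c_n\subset^{*}b_k(n)$ for all $k$, replace $b$ by $b\cap\{(n,m):m\in c_n\}$ and $b_k$ accordingly; only then are the defect sets finite, and only then does $\min d_k(n)\to\infty$ give you room to make them disjoint. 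This fibre-wise P\nobreakdash-point step is exactly what the ``engine'' does in the single-ultrafilter case and is silently skipped in your product version. Second, once the fibres are finite the fusion machinery, Ramsey thinning, and selectivity of $\E$ are all unnecessary: a plain diagonalisation works. Enumerate $e_\infty=\{n_0<n_1<\dotsb\}$, and choose $k_0<k_1<\dotsb$ so that for every $j\le i$, $d_{k_i}(n_j)$ is disjoint from $d_{k_0}(n_j)\cup\dotsb\cup d_{k_{i-1}}(n_j)$; this is a finite requirement at each step and needs no shrinking of $\E$\nobreakdash-sets. Then for every $j$ the sets $\langle d_{k_i}(n_j):i\ge j\rangle$ are pairwise disjoint, the parity argument applies at each $n_j$ (the finitely many indices $i<j$ only contribute a finite union of $\VV_{n_j}$\nobreakdash-null sets and change nothing), and the final dichotomy ``$S\in\E$ or $T\in\E$'' uses only that $\E$ is an ultrafilter. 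In short: P\nobreakdash-pointness of $\E$ and of each $\VV_n$ already suffices; selectivity plays no role, and the appeal to it suggests the step you deferred was not actually worked out.
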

\begin{Theorem} \label{thm:nicemaps}
Let $\U$ be $(\V, \P)$-generic.
In $\V\[\U\]$, let $\phi: \U \rightarrow \Pset(\omega)$ be a monotone non-zero map.
Then there exist $P \subset {\[\omega \times \omega\]}^{< \omega}$ and $\psi: P \rightarrow \omega$ such that
	\begin{enumerate} 
		\item
			$\forall a \in \U\[P \cap {\[a\]}^{< \omega} \neq 0 \]$.
		\item		
			$\forall a \in \U \exists b \in \U \cap {\[a\]}^{\omega}\forall s \in P \cap {\[b\]}^{< \omega}\[\psi(s) \in \phi(b)\]$.
	\end{enumerate}
\end{Theorem}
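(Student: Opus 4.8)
The plan is to take $P = \{s \in {\[\omega \times \omega\]}^{< \omega} : {\psi}_{\phi}(s) \neq 0\}$ and $\psi(s) = \min {\psi}_{\phi}(s)$ for $s \in P$, with ${\psi}_{\phi}$ as in Definition \ref{def:psi} applied to $\phi : \U \to \Pset(\omega)$. With this choice (2) is immediate by taking $b = a$: if $s \in P \cap {\[a\]}^{< \omega}$ then $a$ is one of the sets $b'$ appearing in the intersection defining ${\psi}_{\phi}(s)$, so $\psi(s) \in {\psi}_{\phi}(s) \subseteq \phi(a)$. Thus the whole content is (1), i.e.\ that every $a \in \U$ has a finite subset $s$ with ${\psi}_{\phi}(s) \neq 0$, and I would prove this by contradiction. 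If it fails, then (as $\U$ is generic, and since $\P$ is countably closed and hence adds no reals) the failure is forced by a standard ${p}_{0} \in \P$ which forces ``$\mathring{\phi}$ is monotone and non-zero on $\mathring{\U}$'' and ``${\psi}_{\mathring{\phi}}(s) = 0$ for every $s \in {\[{p}_{0}\]}^{< \omega}$'': first refine to decide a name for the bad set as a ground model set ${a}_{0}$, then intersect with the condition and pass to a standard refinement inside ${a}_{0}$, and note ${\[{p}_{0}\]}^{< \omega} \subseteq {\[{a}_{0}\]}^{< \omega}$. It then suffices to derive a contradiction inside $\V$.

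Working in $\V$, I would first imitate at the level of conditions of $\P$ the device in the proof of Lemma \ref{lem:findetermines} that disposes of all values below $j$ at once. For each standard $p \leq {p}_{0}$, each $s \in {\[{p}_{0}\]}^{< \omega}$ and each $j \in \omega$, build a standard $q(p, s, j) \leq p$ and a set $b(p, s, j)$ with $s \subseteq b(p, s, j)$, $q(p, s, j) \subseteq b(p, s, j) \subseteq {p}_{0}$, $q(p, s, j) \forces b(p, s, j) \in \mathring{\U}$ and $q(p, s, j) \forces \mathring{\phi}(b(p, s, j)) \subseteq [j, \omega)$. Do this in $j$ steps: given $p = {r}_{0} \geq \dots \geq {r}_{k}$ and ${b}_{0}, \dots, {b}_{k - 1}$, the condition ${r}_{k} \leq {p}_{0}$ forces ${\psi}_{\mathring{\phi}}(s) = 0$, hence forces $\exists b \in \mathring{\U}\[s \subseteq b \wedge k \notin \mathring{\phi}(b)\]$; since $\P$ adds no reals, since any condition forcing a ground model set into $\mathring{\U}$ has a standard refinement included in it, and using monotonicity to shrink the witness inside ${p}_{0}$, one gets a standard ${r}_{k + 1} \leq {r}_{k}$ and ${b}_{k} \subseteq {p}_{0}$ with $s \subseteq {b}_{k}$, ${r}_{k + 1} \subseteq {b}_{k}$ and ${r}_{k + 1} \forces k \notin \mathring{\phi}({b}_{k})$. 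Set $q(p, s, j) = {r}_{j}$ and $b(p, s, j) = {\bigcap}_{k < j}{{b}_{k}}$; then $q(p, s, j) \subseteq {b}_{k}$ and $b(p, s, j) \subseteq {b}_{k}$ for all $k < j$, so by monotonicity $q(p, s, j) \forces \mathring{\phi}(b(p, s, j)) \subseteq \mathring{\phi}({b}_{k})$, whence $q(p, s, j) \forces k \notin \mathring{\phi}(b(p, s, j))$ for every $k < j$, as required.

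Next I would replay the side forcing of Theorem \ref{thm:nottop}. Let $\E$ be $(\V, \Pset(\omega) / \FIN)$-generic with $\dom({p}_{0}) \in \E$; in $\V\[\E\]$ force with $\Q = {\left( \Pset(\omega) / \FIN \right)}^{\omega}$ through the condition equal to ${p}_{0}(n)$ in coordinate $n \in \dom({p}_{0})$ and $\omega$ elsewhere, obtaining $G$, and put ${\VV}_{n} = \{x(n) : x \in G\}$, $\VV = {\bigotimes}_{\E}{{\VV}_{n}}$. As in Theorem \ref{thm:nottop} these are selective, $\VV$ is basically generated by ${\BB}_{\VV}$ (Lemma \ref{lem:bv}), ${\BB}_{\VV} \subseteq \P$, ${p}_{0} \in {\BB}_{\VV}$, and since $\Pset(\omega) / \FIN \ast \Q$ is countably closed it adds no new $\omega$-sequences of ordinals, in particular no new subsets of $\omega \times \omega$. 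Fix an enumeration $\langle ({p}_{\alpha}, {s}_{\alpha}, {j}_{\alpha}) : \alpha < {\c}^{\V} \rangle$ of the relevant triples in which each occurs cofinally, and put ${q}_{\alpha} = q({p}_{\alpha}, {s}_{\alpha}, {j}_{\alpha})$, ${b}_{\alpha} = b({p}_{\alpha}, {s}_{\alpha}, {j}_{\alpha})$. Exactly the density computation of Theorem \ref{thm:nottop}, run in $\Pset(\omega) / \FIN \ast \Q$, should give: in $\V\[\E\]\[G\]$, for all $\alpha, s, j$ there is $\beta \geq \alpha$ with $({s}_{\beta}, {j}_{\beta}) = (s, j)$ and ${q}_{\beta} \in \VV$. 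Using this, build in $\V\[\E\]\[G\]$ a sequence $\langle {c}_{i} : i \in \omega \rangle \subseteq {\BB}_{\VV}$ converging (in $\Pset(\omega \times \omega)$) to a member of ${\BB}_{\VV}$, with each ${c}_{i}$ a finite modification of some member of ${\BB}_{\VV}$ included in a ${q}_{\beta}$ with ${j}_{\beta} = i$ — so $\[{c}_{i}\] \leq \[{q}_{\beta}\]$ in $\Pset(\omega \times \omega) / \I$ — arranged, using that $\I$ contains every finite subset of $\omega \times \omega$, so that ${c}_{i} \subseteq {b}_{\beta}$ still holds honestly. By basic generation of $\VV$ there is $X \in \cube$ with ${q}^{\ast} := {\bigcap}_{i \in X}{{c}_{i}} \in \VV \subseteq \P$; as no new subsets of $\omega \times \omega$ were added, ${q}^{\ast} \in \V$, and ${q}^{\ast} \subseteq {p}_{0}$. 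For $i \in X$, with $\beta$ as above, ${q}^{\ast} \subseteq {c}_{i} \subseteq {b}_{\beta}$ and $\[{q}^{\ast}\] \leq \[{c}_{i}\] \leq \[{q}_{\beta}\]$, so ${q}^{\ast} \forces \mathring{\phi}({b}_{\beta}) \subseteq [i, \omega)$, and by monotonicity ${q}^{\ast} \forces \mathring{\phi}({q}^{\ast}) \subseteq \mathring{\phi}({b}_{\beta}) \subseteq [i, \omega)$. As $X$ is infinite this forces $\mathring{\phi}({q}^{\ast}) = 0$, which together with ${q}^{\ast} \forces {q}^{\ast} \in \mathring{\U}$ and ${q}^{\ast} \leq {p}_{0}$ contradicts ${p}_{0} \forces$ ``$\mathring{\phi}$ is non-zero on $\mathring{\U}$''.

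The main obstacle is the construction of $\langle {c}_{i} \rangle$ in the last paragraph: one must at the same time steer the ground model witnesses ${q}_{\beta}$ into $\VV$ (the exact analogue of the key claim in the proof of Theorem \ref{thm:nottop}) and thin them into a sequence lying in ${\BB}_{\VV}$ that converges to a member of ${\BB}_{\VV}$ — which forces one to track domains and columns carefully, and is where selectivity of $\E$ and of the ${\VV}_{n}$ is used — while ensuring that the finite corrections demanded by the convergence requirement never destroy the inclusions ${c}_{i} \subseteq {b}_{\beta}$ on which the appeal to monotonicity of $\mathring{\phi}$ rests; that such corrections are harmless is precisely the role of $\P$ being insensitive to finite modifications, i.e.\ ${\[\omega \times \omega\]}^{< \omega} \subseteq \I$. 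As an alternative one can bypass the side forcing altogether and instead run a two‑dimensional fusion directly in $\P$, handling at stage $n$ the instance ${\psi}_{\mathring{\phi}}(s) = 0$ for $s$ the finite piece of the fusion committed so far.
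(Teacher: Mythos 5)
Your proposal is a genuinely different route.  You begin by observing that with $P = \{s : {\psi}_{\phi}(s) \neq 0\}$ and $\psi(s) = \min {\psi}_{\phi}(s)$ (with ${\psi}_{\phi}$ computed from $\phi$ itself in $\V\[\U\]$), clause (2) is automatic with $b = a$, so the entire content is clause (1).  This is a stronger commitment than the theorem makes: it is exactly the conclusion of Lemma~\ref{lem:findetermines} applied directly to $\phi$ on $\U$, i.e.\ to an ultrafilter that Theorem~\ref{thm:notbasicallygenerated} shows is \emph{not} basically generated, so that lemma is not available.  You therefore substitute a bespoke argument: build, in $\V$, conditions $q(p, s, j)$ together with witnesses $b(p, s, j)$ satisfying $q(p,s,j) \forces \mathring{\phi}(b(p,s,j)) \subseteq [j, \omega)$, then use the side forcing $\Pset(\omega)/\FIN * \Q$ and basic generation of $\VV = {\bigotimes}_{\E}{{\VV}_{n}}$ to land a single condition ${q}^{\ast} \in \V \cap \P$ below ${p}_{0}$ that forces $\mathring{\phi}({q}^{\ast}) = 0$.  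The paper's proof goes the other way around: it argues by contradiction over \emph{all} $(P, \psi)$, introduces the auxiliary map $\chi(p) = \{k : \exists q \leq p\[q \forces k \in \mathring{\phi}(p)\]\}$ defined in $\V$, applies Lemma~\ref{lem:findetermines} in $\V\[\E\]\[G\]$ to $\chi \restrict \BB_{\VV}$ (a monotone non-zero map on a genuinely basically generated ultrafilter), and then only needs the density argument to locate a \emph{single} ${q}_{\alpha} \in \BB_{\VV}$ whose attached data $({A}_{\alpha}, {\psi}_{\alpha})$ match the $(A, \psi)$ produced by $\chi$.  Thus the paper's proof never has to extract a convergent sequence in the side model — one hit suffices — while your argument needs an entire coherent family.

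That family is exactly where your proof is incomplete.  You acknowledge that building $\langle {c}_{i}: i \in \omega \rangle \subseteq \BB_{\VV}$, converging to a member of $\BB_{\VV}$, with each ${c}_{i} \subseteq b_{\beta(i)}$ and ${j}_{\beta(i)} = i$, is the ``main obstacle,'' and you do not carry it out.  The naive attempt fails: a shrinking ${c}_{i}' \subseteq {q}_{\beta(i)}$ inside $\BB_{\VV}$ gives no control over the finite trace ${c}_{i}' \cap (i \times i)$, and you cannot simply replace that trace by that of a fixed target $c \in \BB_{\VV}$, since the replacement need not lie inside $b_{\beta(i)}$.  The fix, which I think is what your phrase ``tracking domains and columns carefully'' gestures at, is to exploit that $s \subseteq b(p, s, j)$ by construction: first fix a target $c \in \BB_{\VV}$, then for each $i$ use the density argument to find $\beta(i)$ with ${s}_{\beta(i)} = c \cap (i \times i)$, ${j}_{\beta(i)} = i$, and ${q}_{\beta(i)} \in \VV$; then set ${c}_{i} = ({c}_{i}'' \setminus (i \times i)) \cup (c \cap (i \times i))$ for a suitable ${c}_{i}'' \in \BB_{\VV}$ with ${c}_{i}'' \subseteq {q}_{\beta(i)}$.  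This makes ${c}_{i} \subseteq b_{\beta(i)}$ honest and ${c}_{i} \to c$.  That step is a genuine missing idea in your write-up rather than a routine detail, and it is precisely what the paper's more indirect $\chi$/Lemma~\ref{lem:findetermines} strategy is designed to avoid.  Modulo spelling out this fusion, your argument would constitute a correct alternative proof, and it has the small advantage of making the theorem's $P$ completely explicit in $\V\[\U\]$.
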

\begin{proof}
The proof is similar to the proof of Theorem \ref{thm:nottop}.
Suppose that the theorem fails.
Fix $\mathring{\phi} \in {\V}^{\P}$ such that $\forces {\mathring{\phi}: \mathring{\U} \rightarrow \Pset(\omega) \ \text{is a monotone non-zero map}}$.
Fix a standard ${p}_{0} \in \P$ such that for any $P \subset {\[\omega \times \omega\]}^{< \omega}$ and $\psi: P \rightarrow \omega$, 
\begin{align*}
	{p}_{0} \; \forces \; ``\text{either} & \ \exists a  \in \mathring{\U}\[P \cap {\[a\]}^{< \omega} = 0 \] \\ & \text{or} \ \exists a \in \mathring{\U} \forall b \in \mathring{\U} \cap {\[a\]}^{\omega} \exists s \in P \cap {\[b\]}^{< \omega}\[\psi(s) \notin \mathring{\phi}(b)\]''.
\end{align*}
Let $\{\langle {p}_{\alpha}, {A}_{\alpha}, {\psi}_{\alpha} \rangle: \alpha < {\c}^{\V} \}$ enumerate all triples $\langle p, A, \psi \rangle$ such that $p \in \P$ and $p \leq {p}_{0}$, $A \subset {\[\omega \times \omega\]}^{< \omega}$, and $\psi: A \rightarrow \omega$.
Define $\chi: \P \rightarrow \Pset(\omega)$ by $\chi(p) = \left\{k \in \omega: \exists q \leq p \[q \forces k \in \mathring{\phi}(p)\]\right\}$.
Observe that if $q \leq p$, then $q \forces p \in \mathring{\U}$, and hence $q \forces \mathring{\phi}(p) \ \text{is defined}$.
Next, it is easy to check that $\chi$ is monotone.
Moreover, $p \forces \mathring{\phi}(p) \neq 0$.
Therefore, for some $q \leq p$ and $k \in \omega$, $q \forces k \in \mathring{\phi}(p)$, whence $k \in \chi(p)$.
Thus $\chi$ is monotone and non-zero.
Now build a sequence $\langle {q}_{\alpha}: \alpha < {\c}^{\V} \rangle$ with the following properties:
\begin{enumerate}
	\item[(3)]
		${q}_{\alpha} \in \P$, ${q}_{\alpha}$ is standard, and ${q}_{\alpha} \subset {p}_{\alpha}$.
	\item[(4)]
		either ${A}_{\alpha} \cap {\[{q}_{\alpha}\]}^{< \omega} = 0$ or for some $s \in {A}_{\alpha} \cap {\[{q}_{\alpha}\]}^{< \omega}$, ${\psi}_{\alpha}(s) \notin \chi({q}_{\alpha})$.
\end{enumerate} 
To see how to build such a sequence, fix $\alpha < {\c}^{\V}$.
Let $\U$ be $(\V, \P)$-generic with ${p}_{\alpha} \in \U$.
Since ${p}_{\alpha} \leq {p}_{0}$, in $\V\[\U\]$ either there is $a \in \U$ such that ${A}_{\alpha} \cap {\[a\]}^{< \omega} = 0$ or there is $a \in \U$ such that for all $b \in \U \cap {\[a\]}^{\omega}$, there exists $s \in {A}_{\alpha} \cap {\[b\]}^{< \omega}$ such that ${\psi}_{\alpha}(s) \notin \mathring{\phi}\[\U\](b)$.
Suppose that the first case happens.
Let ${q}_{\alpha}$ be a standard element of $\P$ such that ${q}_{\alpha} \subset {p}_{\alpha} \cap a$.
Then ${\[{q}_{\alpha}\]}^{< \omega} \cap {A}_{\alpha} \subset {\[a\]}^{< \omega} \cap {A}_{\alpha} = 0$.

Now suppose that the second case happens in $\V\[\U\]$.
Working in $\V\[\U\]$ fix $a \in \U$ as in the second case.
Let $b \in \U$ be standard such that $b \subset {p}_{\alpha} \cap a$.
Since $b \in \U \cap {\[a\]}^{\omega}$ there is $s \in {A}_{\alpha} \cap {\[b\]}^{< \omega}$ such that ${\psi}_{\alpha}(s) \notin \mathring{\phi}\[\U\](b)$.
Find ${q}^{\ast} \in \U$ such that (in $\V$) ${q}^{\ast} \forces {\psi}_{\alpha}(s) \notin \mathring{\phi}(b)$.
Let $q \in \U$ be standard so that $q \subset b \cap {q}^{\ast}$.
Back in $\V$, define ${q}_{\alpha}$ as follows.
For $n \in \dom(s)$, put ${q}_{\alpha}(n) = b(n)$.
If $n \in \omega \setminus \dom(s)$, then ${q}_{\alpha}(n) = q(n)$.
Note that ${q}_{\alpha} \in \P$, it is standard, and $s \subset {q}_{\alpha} \subset b \subset {p}_{\alpha}$.
Moreover, if $\langle n, m \rangle \in {q}_{\alpha} \setminus q$, then $n \in \dom(s)$.
As $\dom(s)$ is finite, ${q}_{\alpha} \setminus q \in \I$.
Therefore, ${q}_{\alpha} \leq q$ and ${q}_{\alpha} \forces {\psi}_{\alpha}(s) \notin \mathring{\phi}(b)$.
Note that $s \in {A}_{\alpha} \cap {\[{q}_{\alpha}\]}^{< \omega}$.
To see that ${\psi}_{\alpha}(s) \notin \chi({q}_{\alpha})$, suppose for a contradiction that there is $r \leq {q}_{\alpha}$ such that $r \forces {\psi}_{\alpha}(s) \in \mathring{\phi}({q}_{\alpha})$.
As ${q}_{\alpha} \subset b$, $r \forces {\psi}_{\alpha}(s) \in \mathring{\phi}(b)$, which is impossible.
This completes the construction of ${q}_{\alpha}$.

Just as in the proof of Theorem \ref{thm:nottop}, let $\E$ be $(\V, \Pset(\omega) / \FIN)$-generic with $\dom({p}_{0}) \in \E$.
In $\V\[\E\]$ consider the poset $\Q = {\left( \Pset(\omega) / \FIN \right)}^{\omega}$.
Define ${x}_{0} \in \Q$ as follows.
For any $n \in \dom({p}_{0})$, ${x}_{0}(n) = {p}_{0}(n)$.
For any $n \notin \dom({p}_{0})$, ${x}_{0}(n) = \omega$.
Let $G$ be $(\V\[\E\], \Q)$-generic with ${x}_{0} \in G$.
In $\V[\E][G]$ define for each $n \in \omega$, ${\VV}_{n} = \{x(n): x \in G\}$.
It is clear that $\E$ and each ${\VV}_{n}$ are selective ultrafilters in $\V\[\E\]\[G\]$.
Put $\VV = {\bigotimes}_{\E}{{\VV}_{n}}$.
Then $\VV$ is basically generated by ${\BB}_{\VV}$.
Note that ${\BB}_{\VV} \subset \VV \subset \P$.
Put $\phi = \chi \restrict {\BB}_{\VV}$.
Note that the hypotheses of Lemma \ref{lem:findetermines} are satisfied.
Put $A = \{s \in {\[\omega \times \omega\]}^{< \omega}: {\psi}_{\phi}(s) \neq 0 \}$.
Define $\psi: A \rightarrow \omega$ by $\psi(s) = \min({\psi}_{\phi}(s))$ for any $s \in A$.
We claim that there exists $\alpha < {\c}^{\V}$ such that ${q}_{\alpha} \in {\BB}_{\VV}$ and ${A}_{\alpha} = A$ and ${\psi}_{\alpha} = \psi$.
Suppose for a moment that this claim is true.
Applying Lemma \ref{lem:findetermines} to ${q}_{\alpha}$ find $s \in {\[{q}_{\alpha}\]}^{< \omega}$ such that ${\psi}_{\phi}(s) \neq 0$.
So $s \in {A}_{\alpha} \cap {\[{q}_{\alpha}\]}^{< \omega}$.
Moreover, by the definition of ${\psi}_{\phi}$, for any $t \in {A}_{\alpha} \cap {\[{q}_{\alpha}\]}^{< \omega}$, ${\psi}_{\phi}(t) \subset \phi({q}_{\alpha}) = \chi({q}_{\alpha})$. 
This means that for every $t \in {A}_{\alpha} \cap {\[{q}_{\alpha}\]}^{< \omega}$, ${\psi}_{\alpha}(t) \in \chi({q}_{\alpha})$.
But this contradicts the way ${q}_{\alpha}$ was constructed.

To prove the claim first note that $A$ and $\psi$ are in $\V$.
In $\V\[\E\]$ define $D(A, \psi)$ as
\begin{align*}
	 \{y \in \Q: \exists a \in \E \exists \alpha < {\c}^{\V} \[\dom({q}_{\alpha}) = a, y \restrict a = {q}_{\alpha}, {A}_{\alpha} = A, \ \text{and} \ {\psi}_{\alpha} = \psi \]\}.
\end{align*}
We argue that $D(A, \psi)$ is dense below ${x}_{0}$.
Fix $x \in \Q$ with $x \leq {x}_{0}$.
Note that $x \in \V$.
Working in $\V$ define $D(x, A, \psi) = \{\dom({q}_{\alpha}): \alpha < {c}^{\V}, {q}_{\alpha} \subset x \restrict \omega, {A}_{\alpha} = A, \ \text{and} \ {\psi}_{\alpha} = \psi \}$.
To see that $D(x, A, \psi)$ is dense below $\dom({p}_{0})$ fix $a \in {\[\dom({p}_{0})\]}^{\omega}$. 
Put $p = x \restrict a$ and note that $p \in \P$ and that $p \leq {p}_{0}$.
Therefore, there exists $\alpha < {\c}^{\V}$ such that ${p}_{\alpha} = p$, ${A}_{\alpha} = A$, and ${\psi}_{\alpha} = \psi$.
Thus ${q}_{\alpha} \subset x \restrict a \subset x \restrict \omega$.
Also $\dom({q}_{\alpha}) \subset a$.
Therefore $\dom({q}_{\alpha})$ is as needed.
Back in $\V\[\E\]$, fix $a \in \E$ and $\alpha < {\c}^{\V}$ such that $\dom({q}_{\alpha}) = a$, ${q}_{\alpha} \subset x \restrict \omega$, ${A}_{\alpha} = A$, and ${\psi}_{\alpha} = \psi$.
For $n \in a$, put $y(n) = {q}_{\alpha}(n)$.
For $n \in \omega \setminus a$, put $y(n) = x(n)$.
Then $y \in \Q$ and $y \leq x$.
It is clear that $y \in \Q$ and that $y \leq x$.
Also $y \restrict a = {q}_{\alpha}$ and so it is clear that $y$ is as needed.
So in $\V\[\E\]\[G\]$, there is $y \in G$, $a \in \E$, and $\alpha < {\c}^{\V}$ such that $\dom({q}_{\alpha}) = a$, $y \restrict a = {q}_{\alpha}$, ${A}_{\alpha} = A$, and ${\psi}_{\alpha} = \psi$.
Since $\dom({q}_{\alpha}) = a \in \E$ and for all $n \in \dom({q}_{\alpha})$, ${q}_{\alpha}(n) = y(n) \in {\VV}_{n}$, ${q}_{\alpha} \in {\BB}_{\VV}$, and we are done.
\end{proof}
Now we show that the conclusion of Theorem 17 of \cite{tukey}, which was proved there to hold for all basically generated ultrafilters, also holds for $\mathring{\U}$.
\begin{Def} \label{def:UP}
	Let $\U$ be an ultrafilter on $\omega \times \omega$, and let $P \subset {\[\omega \times \omega\]}^{< \omega} \setminus \{0\}$.
We define ${\U}(P) = \{A \subset P: \exists a \in \U\[P \cap {\[a\]}^{< \omega} \subset A \]\}$.
\end{Def}   
If $\forall a \in \U\[\lc P \cap {\[a \]}^{< \omega} \rc = \omega\]$, then ${\U}(P)$ is a proper, non-principal filter on $P$.
The following theorem says that any Tukey reduction from $\mathring{\U}$ is given by an Rudin-Keisler reduction from $\mathring{\U}(P)$ for some $P$.
\begin{Theorem} \label{thm:canonical}
Let $\U$ be $(\V, \P)$-generic.
In $\V\[\U\]$, let $\VV$ be an arbitrary ultrafilter so that $\VV \; {\leq}_{T} \; \U$.
Then there is $P \subset {\[\omega \times \omega\]}^{< \omega} \setminus \{0\}$ such that
	\begin{enumerate}
		\item
			$\forall t, s \in P \[t \subset s \implies t = s \]$		
		\item
			$\U(P) \; {\equiv}_{T} \; \U$
		\item
			$\VV \; {\leq}_{RK} \; \U(P)$
	\end{enumerate}
\end{Theorem}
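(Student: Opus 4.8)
The plan is to read off $P$ and the Rudin--Keisler map directly from the canonical form supplied by Theorem~\ref{thm:nicemaps}, exactly paralleling how Theorem~17 of \cite{tukey} is obtained from its Lemma~16. We work throughout in $\V\[\U\]$. The case where $\VV$ is principal is trivial: take $P = \{\{x\} : x \in \omega \times \omega\}$, which is a $\subseteq$-antichain with $0 \notin P$, satisfies $\U(P) \cong \U$ (whence clause (2)), and admits the constant map onto the generator of $\VV$ as in clause (3). So assume $\VV$ is non-principal. Recall that a Tukey reduction $\VV \leq_T \U$ is always witnessed by a \emph{monotone} map $\phi \colon \U \to \VV$ whose range is cofinal in $\VV$, i.e. $\forall B \in \VV\, \exists a \in \U\,[\phi(a) \subseteq B]$: starting from any $g \colon \U \to \VV$ sending cofinal subsets of $\U$ to cofinal subsets of $\VV$, put $\phi(a) = \bigcup\{g(a') : a' \in \U,\ a' \subseteq a\}$; this is monotone, it maps into $\VV$ (hence is non-zero), and it is cofinal because for each $B \in \VV$ the set $\{a' \in \U : g(a') \not\subseteq B\}$ cannot be cofinal in $\U$ --- its image under $g$ would be a cofinal subset of $\VV$ no member of which is contained in $B$ --- so picking $a^* \in \U$ beneath its complement gives $\phi(a^*) \subseteq B$.

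Now apply Theorem~\ref{thm:nicemaps} to this $\phi$, obtaining $P_0 \subseteq {\[\omega\times\omega\]}^{<\omega}$ and $\psi_0 \colon P_0 \to \omega$ satisfying clauses (1) and (2) there. First, $0 \notin P_0$: if $0 \in P_0$, then clause (2) together with the monotonicity and cofinality of $\phi$ would force $\psi_0(0)$ to lie in every member of $\VV$, contradicting non-principality. Let $P$ be the set of $\subseteq$-minimal elements of $P_0$ and $\psi = \psi_0 \restrict P$. Since $\subsetneq$ is well founded on finite sets, $P$ is a $\subseteq$-antichain, and $0 \notin P$. Moreover, for every $a \in \U$ the set $P_0 \cap {\[a\]}^{<\omega}$ is nonempty (clause (1) there) and any $\subseteq$-minimal member of it is $\subseteq$-minimal in all of $P_0$, hence lies in $P$; so $P \cap {\[a\]}^{<\omega} \neq 0$ for all $a \in \U$. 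Because $0 \notin P$, this in fact forces $\lc P \cap {\[a\]}^{<\omega} \rc = \omega$ for every $a \in \U$ (if it were finite, with union $c$, then $a \cap ((\omega\times\omega) \setminus c) \in \U$ would contain no nonempty member of $P$), so $\U(P)$ is a proper nonprincipal filter. This gives clause (1) of the theorem; and clause (2) of Theorem~\ref{thm:nicemaps} is clearly inherited by $P$ with $\psi$ in place of $P_0, \psi_0$, since $P \subseteq P_0$.

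For clause (2) of the theorem I would exhibit monotone maps with cofinal range in both directions. The map $a \mapsto P \cap {\[a\]}^{<\omega}$ is a monotone map $\U \to \U(P)$ with cofinal range, so $\U(P) \leq_T \U$. Conversely $A \mapsto \bigcup A$ is a monotone map $\U(P) \to \U$ with cofinal range, so $\U \leq_T \U(P)$; the only point to check is that $\bigcup A \in \U$ whenever $A \in \U(P)$, for which it suffices that $\bigcup (P \cap {\[a\]}^{<\omega}) \in \U$ for each $a \in \U$. If this failed, then letting $c$ be the complement of that union, $a \cap c \in \U$, yet any nonempty $s \in P$ with $s \subseteq a \cap c$ would satisfy $s \subseteq a$, hence $s \subseteq \bigcup(P \cap {\[a\]}^{<\omega})$ and $s \cap c = 0$, whence $s = 0$; this contradicts $P \cap {\[a \cap c\]}^{<\omega} \neq 0$.

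Finally, clause (3): I claim $\psi$ witnesses $\VV \leq_{RK} \U(P)$, i.e. $\VV = \psi_*(\U(P))$. Given $B \in \VV$, pick $a_0 \in \U$ with $\phi(a_0) \subseteq B$, and then $b \in \U \cap {\[a_0\]}^{\omega}$ as in clause (2) of Theorem~\ref{thm:nicemaps} (now for $P$ and $\psi$); since $\phi(b) \subseteq \phi(a_0) \subseteq B$ and $\psi(s) \in \phi(b)$ for all $s \in P \cap {\[b\]}^{<\omega}$, we have $P \cap {\[b\]}^{<\omega} \subseteq \psi^{-1}(B)$, so $\psi^{-1}(B) \in \U(P)$. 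Thus $\VV \subseteq \psi_*(\U(P))$; and since $\U(P)$ is a proper filter, $\psi_*(\U(P))$ is a proper filter extending the ultrafilter $\VV$, hence equals $\VV$. The bulk of the work is already done in Theorem~\ref{thm:nicemaps} (which plays the combined role of Lemma~16 of \cite{tukey} and the basic-generation machinery there), so the argument above is mostly bookkeeping; the step I would watch most carefully is the passage to the antichain $P$ while keeping $\U(P) \equiv_T \U$, since thinning $P_0$ shrinks $\bigcup (P \cap {\[a\]}^{<\omega})$ and one must make sure it stays in $\U$ so that $A \mapsto \bigcup A$ still witnesses $\U \leq_T \U(P)$.
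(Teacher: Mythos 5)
Your proof is correct and follows essentially the same route as the paper: reduce to a monotone cofinal $\phi$, apply Theorem~\ref{thm:nicemaps}, pass to the $\subseteq$-minimal elements to get the antichain $P$, verify $\bigcup(P\cap{\[a\]}^{<\omega})\in\U$, and read off the Rudin--Keisler map from $\psi$. The only real deviations are cosmetic --- you supply an explicit monotone cofinal map $A\mapsto\bigcup A$ in place of the paper's direct verification that $\chi$ is a Tukey map, and you treat the principal case of $\VV$ separately (which is a mild tidy-up: the paper's choice of $e\in\VV$ with $\psi(0)\notin e$ tacitly assumes $\VV$ is not the principal ultrafilter at $\psi(0)$).
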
  
\begin{proof}
The proof is almost the same as the proof of Theorem 17 of \cite{tukey}.
Work in $\V\[\U\]$.
Fix an ultrafilter $\VV$ and a map $\phi: \U \rightarrow \VV$ which is monotone and cofinal in $\VV$.
Since $\phi$ is monotone and non-zero, fix $A \subset {\[\omega \times \omega\]}^{< \omega}$ and $\psi: A \rightarrow \omega$ as in Theorem \ref{thm:nicemaps}.
First we claim that $0 \notin A$.
Indeed suppose for a contradiction that $0 \in A$ and let $k = \psi(0)$.
Let $e \in \VV$ be such that $k \notin e$ and let $a \in \U$ be such that $\phi(a) \subset e$.
By (2) of Theorem \ref{thm:nicemaps} there is $b \in \U \cap {\[a\]}^{\omega}$ such that for all $s \in A \cap {\[b\]}^{< \omega}$, $\psi(s) \in \phi(b)$.
However, $0 \in A \cap {\[b\]}^{< \omega}$, and so $k = \psi(0) \in \phi(b) \subset \phi(a) \subset e$, a contradiction.
Thus $0 \notin A$.
Define
\begin{align*}
	P = \{s \in A: s \ \text{is minimal in} \ A \ \text{with respect to} \ \subset \}. 
\end{align*}
It is clear that $P \subset {\[\omega \times \omega\]}^{< \omega} \setminus \{0\}$ and that $P$ satisfies (1) by definition. 

Next, for any $a \in \U$, ${\bigcup}{\left(P \cap {\[a\]}^{< \omega}\right)} \in \U$.
To see this, fix $a \in \U$, and suppose that $a \setminus \left({\bigcup}{\left(P \cap {\[a\]}^{< \omega}\right)}\right) \in \U$.
By (1) of Theorem \ref{thm:nicemaps}, fix $s \in A$ with $s \subset a \setminus \left({\bigcup}{\left(P \cap {\[a\]}^{< \omega}\right)}\right)$.
However there is $t \in P$ with $t \subset s$, whence $t = 0$, an impossibility.
It follows from this that for each $a \in \U$, $P \cap {\[a\]}^{< \omega}$ is infinite.     

Next, verify that $\U(P) \; {\equiv}_{T} \; \U$.
Define $\chi: \U \rightarrow \U(P)$ by $\chi(a) = P \cap {\[a\]}^{< \omega}$, for each $a \in \U$.
This map is clearly monotone and cofinal in $\U(P)$.
So $\chi$ is a convergent map.
On the other hand, $\chi$ is also Tukey.
To see this, fix $\XX \subset \U$, unbounded in $\U$.
Assume that $\{\chi(a): a \in \XX\}$ is bounded in $\U(P)$.
So there is $b \in \U$ such that $P \cap {\[b\]}^{< \omega} \subset P \cap {\[a\]}^{< \omega}$ for each $a \in \XX$.
However $c = {\bigcup}{\left(P \cap {\[b\]}^{< \omega}\right)} \in \U$.
Now, it is clear that $c \subset a$, for each $a \in \XX$, a contradiction.  

Next, check that $\VV \; {\leq}_{RK} \; \U(P)$.
Define $f: P \rightarrow \omega$ by $f = \psi \restrict P$.
Fix $e \subset \omega$, and suppose first that ${f}^{-1}(e) \in \U(P)$.
Fix $a \in \U$ with $P \cap {\[a\]}^{< \omega} \subset {f}^{-1}(e)$. 
If $e \notin \VV$, then $\omega \setminus e \in \VV$, and there exists $c \in \U$ with $\phi(c) \subset \omega \setminus e$.
By (2) of Theorem \ref{thm:nicemaps} fix $b \in \U \cap {\[a \cap c\]}^{\omega}$ such that for all $s \in A \cap {\[b\]}^{< \omega}$, $\psi(s) \in \phi(b)$.
By (1) of Theorem \ref{thm:nicemaps}, fix $s \in A \cap {\[b\]}^{< \omega}$. 
Fix $t \subset s$ with $t \in P$.
Let $k = f(t) = \psi(t)$.
As $t \subset s \subset b \subset a$, $t \in P \cap {\[a\]}^{< \omega} \subset {f}^{-1}(e)$.
Thus $k \in e$.
On the other hand, since $t \in A \cap {\[b\]}^{< \omega}$, $\psi(t) \in \phi(b)$.
So $k \in \phi(b) \subset \phi(c) \subset \omega \setminus e$, a contradiction.

Next, suppose that $e \in \VV$.
By cofinality of $\phi$, there is $a \in \U$ such that $\phi(a) \subset e$.
Applying (2) of Theorem \ref{thm:nicemaps}, fix $b \in \U \cap {\[a\]}^{\omega}$ such that for all $s \in A \cap {\[b\]}^{< \omega}$, $\psi(s) \in \phi(b)$.
Now, if $s \in P \cap {\[b\]}^{< \omega}$, then $f(s) = \psi(s) \in \phi(b) \subset \phi(a) \subset e$.
Therefore, $P \cap {\[b\]}^{< \omega} \subset {f}^{-1}(e)$, whence ${f}^{-1}(e) \in \U(P)$.  
\end{proof}
An immediate corollary of Theorem \ref{thm:canonical} is that if $\U$ is $(\V, \P)$-generic, then in $\V\[\U\]$, $\{\VV: \VV \ \text{is an ultrafilter on} \ \omega \ \text{and} \ \VV \; {\leq}_{T} \; \U\}$ has size $\c$.

Next we show that $\mathring{\U}$ is not basically generated.
As far as we are aware, this is the first example (even consistently) of an ultrafilter that is not basically generated and whose cofinal type is not maximal.
Thus our result establishes the consistency of the statement ``$\exists \U\[\U \ \text{is not basically generated} \ \text{and} \ {\[\c\]}^{< \omega} \; {\not\leq}_{T} \; \U\]$''.  
\begin{Theorem} \label{thm:notbasicallygenerated}
$\forces {\mathring{\U} \ \text{is not basically generated}}$.
\end{Theorem}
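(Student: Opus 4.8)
The plan is to argue by contradiction, in the spirit of the proofs of Theorems \ref{thm:nottop} and \ref{thm:nicemaps}. Suppose some standard ${p}_{0} \in \P$ forces that $\mathring{\BB}$ is a filter base for $\mathring{\U}$ witnessing that $\mathring{\U}$ is basically generated; I will produce $q \leq {p}_{0}$ forcing the negation, which suffices since ${p}_{0}$ was arbitrary. First a reformulation of what a ``bad'' sequence is: if $\langle {c}_{k} : k \in \omega \rangle$ converges to $c$ in $\Pset(\omega \times \omega)$, then, writing ${d}_{k} = c \setminus {c}_{k}$, both $\langle {d}_{k} : k \in \omega \rangle$ and $\langle {c}_{k} \setminus c : k \in \omega \rangle$ are point-finite, so ${\bigcap}_{k \in X}{{c}_{k}} = c \setminus {\bigcup}_{k \in X}{{d}_{k}}$ for every infinite $X \subset \omega$; hence ${\bigcap}_{k \in X}{{c}_{k}} \in \mathring{\U}$ if and only if ${\bigcup}_{k \in X}{{d}_{k}} \notin \mathring{\U}$. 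Thus it is enough to force the existence of $c \in \mathring{\BB}$ and of ${c}_{k} \in \mathring{\BB}$ with ${c}_{k} \to c$ such that ${\bigcup}_{k \in X}{{d}_{k}} \in \mathring{\U}$ holds for \emph{every} infinite $X$.

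The one feature of $\mathring{\BB}$ I will use is that, being a filter base for $\mathring{\U}$, it is forced to meet every dense subset of $\P$ lying in $\V$: if $D \in \V$ is dense open, then $D \cap \mathring{\U} \neq 0$ by genericity, and a member of $\mathring{\BB}$ below any element of $D \cap \mathring{\U}$ again lies in $D$. Now run the machinery of Theorem \ref{thm:nottop}. Enumerate $\{p \in \P : p \leq {p}_{0}\}$ as $\langle {p}_{\alpha} : \alpha < {\c}^{\V} \rangle$ with each condition appearing cofinally often. Using the countable closure of $\P$ to decide names for subsets of $\omega \times \omega$ completely, build for each $\alpha$ a standard ${q}_{\alpha} \in \P$ with ${q}_{\alpha} \leq {p}_{\alpha}$, and a set ${b}_{\alpha}$ with ${q}_{\alpha} \subset {b}_{\alpha}$ and ${q}_{\alpha} \forces {b}_{\alpha} \in \mathring{\BB}$. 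These choices are threaded through a fusion along which a standard set $c$ with $\dom(c) = \{{n}_{0} < {n}_{1} < \cdots\}$ is built, with each column $c({n}_{i})$ split into infinitely many infinite pieces $\langle {E}^{k}_{i} : k \in \omega \rangle$, in such a way that the ${b}_{\alpha}$ appearing along the branch agree with $c$ off an initial segment of columns except for removing ${E}^{k}_{i}$ from column ${n}_{i}$ when $i$ is large relative to $k$. This makes those ${b}_{\alpha}$ converge to $c$ and each ${d}_{\alpha} = c \setminus {b}_{\alpha}$ be carried by its assigned pieces, so that the overlaps of the ${d}_{\alpha}$ are spread over all indices rather than localized per column --- which, by the reformulation above, is precisely what is required to keep every infinite subunion of the ${d}_{\alpha}$ large (and its complement small) in $\mathring{\U}$.

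To finish, repeat the construction from the proof of Theorem \ref{thm:nottop}: let $\E$ be $(\V, \Pset(\omega) / \FIN)$-generic with $\dom({p}_{0}) \in \E$, let $G$ be $(\V[\E], \Q)$-generic for $\Q = {\left( \Pset(\omega) / \FIN \right)}^{\omega}$, put ${\VV}_{n} = \{x(n) : x \in G\}$ and $\VV = {\bigotimes}_{\E}{{\VV}_{n}}$, basically generated by ${\BB}_{\VV} \subset \P$ by Lemma \ref{lem:bv}. A density computation exactly as there gives ${q}_{\alpha} \in \VV$ for cofinally many $\alpha$, whence (using that $\VV$ is basically generated) there is $X \in \V \cap {\[{\c}^{\V}\]}^{\omega}$ with $q = {\bigcap}_{\alpha \in X}{{q}_{\alpha}} \in \VV \subset \P$ for which the ${b}_{\alpha}$, $\alpha \in X$ --- listed as $\langle {c}_{k} : k \in \omega \rangle$ in increasing order of $X$ --- form a convergent sequence in $\mathring{\BB}$ whose limit $c$ is also forced to lie in $\mathring{\BB}$. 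Since $q \leq {q}_{\alpha}$ for every $\alpha \in X$ we get $q \forces {c}_{k} \in \mathring{\BB}$ for all $k$; and the choice of the pieces ${E}^{k}_{i}$, again via basic generation of $\VV$, guarantees $q \cap {\bigcap}_{k \in X'}{{c}_{k}} \in \I$ for every infinite $X' \subset \omega$, equivalently $q \forces {\bigcup}_{k \in X'}{{d}_{k}} \in \mathring{\U}$. Therefore $q \leq {p}_{0}$ forces that $\langle {c}_{k} : k \in \omega \rangle$ witnesses that $\mathring{\BB}$ is not basically generating, a contradiction.

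The main obstacle is the tension between the only handle on $\mathring{\BB}$ --- cofinality, which yields only \emph{small} members --- and the requirement that the constructed sequence converge to an element of $\mathring{\BB}$, which forces its terms to be \emph{large} on every fixed finite set of columns, all the while keeping every infinite subfamily's intersection out of $\mathring{\U}$. A point-finite family coming from a convergent sequence cannot be made to have \emph{all} its infinite subunions in $\mathring{\U}$ by purely combinatorial means, since for an arbitrary ultrafilter an index set and its complement obstruct this; hence the genericity of $\mathring{\U}$ must be used, and this is where the column structure of $\FIN \times \FIN$ matters. One has to select the ${q}_{\alpha}$, and so the pieces ${E}^{k}_{i}$, while deciding enough of $\mathring{\BB}$ that, after passing to the Fubini product $\VV$ and using that $\P$ adds no new $\omega$-sequences, the single condition $q$ simultaneously certifies membership of every ${c}_{k}$ in $\mathring{\BB}$, convergence of $\langle {c}_{k} \rangle$ to an element of $\mathring{\BB}$, and $q \cap {\bigcap}_{k \in X'}{{c}_{k}} \in \I$ for every candidate witness $X'$. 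Making the family $\{ {q}_{\alpha}, {b}_{\alpha} : \alpha < {\c}^{\V} \}$ regular enough that \emph{every} admissible $X$ produced by the Fubini step yields such a sequence, with a limit that is forced into $\mathring{\BB}$, is the delicate bookkeeping that constitutes the technical heart of the proof.
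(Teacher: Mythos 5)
Your approach takes the machinery of Theorem \ref{thm:nottop} (enumerate $\c$-many conditions below $p_0$, decide names, then pass to $\VV = \bigotimes_\E \VV_n$ to extract a coherent $X$) as the backbone, whereas the paper's proof is quite different and does not use the Fubini-product step at all. The paper builds only an $\omega_1$-sequence of pairs $\langle p_\alpha, x_\alpha\rangle$ with $p_\alpha$ standard, $p_\alpha \subset x_\alpha$, $p_\alpha \forces x_\alpha \in \mathring{\BB}$, and, crucially, a column-wise almost-disjointness condition: for each $n$, only finitely many $\xi < \alpha$ have $|(x_\alpha \cap x_\xi)(n)| = \omega$. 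It then observes that, since $\Pset(\omega\times\omega)^2$ is separable metrizable, some $\langle x_{\alpha_i}, p_{\alpha_i}\rangle$ with $\alpha_i < \delta$ must converge to $\langle x_\delta, p_\delta\rangle$; being basically generating then forces a standard $q \subset x_\delta$ contained in infinitely many $x_{\alpha_i}$, and any single column of $q$ violates the almost-disjointness clause. No auxiliary selective ultrafilters are needed.

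Your proposal has a genuine gap at exactly the place you flag as ``delicate bookkeeping.'' The Fubini machinery from Theorem \ref{thm:nottop} is designed to certify that $\bigcap_{\alpha \in X} q_\alpha$ is $\I$-positive for some $X$; it offers no control whatsoever over the \emph{topological convergence} of the associated $b_\alpha$ to a fixed $c$. Convergence is a pointwise-eventual-agreement condition on $\Pset(\omega\times\omega)$, and the density computation that puts cofinally many $q_\alpha$ into $\VV$ picks $X$ opportunistically --- nothing ties the $b_\alpha$ for $\alpha \in X$ together as a Cauchy sequence with a prescribed limit. You propose to impose this via a fusion producing a single standard $c$ and a splitting $\langle E^k_i \rangle$ of its columns, but a fusion of that kind yields one $\omega$-chain of decisions, not a $\c$-length family of conditions robust enough for the density argument to land inside it; the two constructions pull in different directions and it is not shown how to reconcile them. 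Separately, the preliminary observation that ``a member of $\mathring{\BB}$ below any element of $D \cap \mathring{\U}$ again lies in $D$'' is not correct: $D \in \V$ while the members of $\mathring{\BB}$ live in $\V[\U]$ and need not be ground-model sets, so they cannot literally lie in $D$. What one can say (and what both your sketch and the paper actually use) is that countable closure lets a condition decide a name for a member of $\mathring{\BB}$ to be an explicit set; but that is a statement about the condition, not about $\mathring{\BB}$ meeting $D$. Given these issues, the proposal as written does not establish the theorem, and the core idea that makes the paper's proof work --- using $\omega_1$ (rather than $\c$) many conditions and separability to get the convergent subsequence for free, while maintaining columnwise almost-disjointness along the way --- is absent.
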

\begin{proof}
	Let $\mathring{\BB} \in \VP$ be such that
\begin{enumerate}
	\item
		$\forces \mathring{\BB} \subset \mathring{\U}$
	\item
		$\forces \forall a \in \mathring{\U} \exists b \in \mathring{\BB}\[b \subset a \]$
\end{enumerate}
Let ${p}^{\ast} \in \P$ be standard such that 
\begin{align*}
	{p}^{\ast} \forces ``&\text{every convergent sequence from} \ \mathring{\BB}  \\ & \text{contains an infinite sub-sequence bounded in} \ \mathring{\U}''
\end{align*}
Now build two sequences $\{{p}_{\alpha}: \alpha < {\omega}_{1}\}$ and $\{{x}_{\alpha}: \alpha < {\omega}_{1}\}$ with the following properties.
\begin{enumerate}
	\item[(3)]
		${p}_{\alpha} \subset {p}^{\ast}$, both ${p}_{\alpha}$ and ${x}_{\alpha}$ are elements of $\P$, ${p}_{\alpha}$ is standard, ${p}_{\alpha} \subset {x}_{\alpha}$, and ${p}_{\alpha} \forces {x}_{\alpha} \in \mathring{\BB}$.
	\item[(4)]
		$\forall \xi < \alpha \[{x}_{\alpha} \leq {p}_{\xi}\]$ (therefore, $\forall \xi < \alpha \[{p}_{\alpha} \leq {x}_{\alpha} \leq {p}_{\xi}\]$).
	\item[(5)]
		$\forall n \in \omega \[\{\xi < \alpha: \lc \left( {x}_{\alpha} \cap {x}_{\xi} \right) (n) \rc = \omega \} \ \text{is finite}\]$.
	\item[(6)]
		for each $\alpha < {\omega}_{1}$ and $n \in \dom({p}_{\alpha})$ let $F(\alpha, n) = \{\xi \leq \alpha: {p}_{\alpha}(n) \; {\subset}^{\ast} \; {x}_{\xi}(n)\}$.
Note that $\alpha \in F(\alpha, n)$.
Let $G(\alpha, n) = \{{p}_{\alpha}(n) \cap {x}_{\xi}(n): \xi \in {\omega}_{1} \setminus F(\alpha, n)\}$.
Then $\I(G(\alpha, n))$ is a proper ideal on ${p}_{\alpha}(n)$ for each $\alpha < {\omega}_{1}$ and $n \in \dom({p}_{\alpha})$. 
\end{enumerate}
Suppose for a moment that such sequences can be constructed.
Let $\delta < {\omega}_{1}$ and $\{{\alpha}_{0} < {\alpha}_{1} < \dotsb \} \subset \delta$ be such that $\{\langle {x}_{{\alpha}_{i}}, {p}_{{\alpha}_{i}} \rangle: i \in \omega\}$ converges to $\langle {x}_{\delta}, {p}_{\delta} \rangle$.
Note that for each $i \in \omega$, ${p}_{\delta} \leq {p}_{{\alpha}_{i}}$.
Therefore ${p}_{\delta} \forces \{{x}_{{\alpha}_{i}}: i < \omega\} \cup \{{x}_{\delta}\} \subset \mathring{\BB}$. 
Since ${p}_{\delta} \leq {p}^{\ast}$ and since $\P$ does not add any countable sets of ordinals, there exist $X \in \cube$ and a standard $q \in \P$ such that $q \subset {x}_{\delta}$ and $\forall i \in X \[q \subset {x}_{{\alpha}_{i}}\]$.
Fix $n \in \dom(q)$.
Then for each $i \in X$, $q(n) \subset {x}_{\delta}(n) \cap {x}_{{\alpha}_{i}}(n)$.
So $\{{\alpha}_{i}: i \in X\} \subset \{\alpha < \delta: \lc \left( {x}_{\delta} \cap {x}_{\alpha} \right) (n) \rc = \omega \}$, contradicting (5).

To see how to build such sequences, note first that if $\delta \leq {\omega}_{1}$ is a limit ordinal and if for each $\beta < \delta$ the sequences $\langle {x}_{\alpha}: \alpha < \beta \rangle$ and $\langle {p}_{\alpha}: \alpha < \beta \rangle$ do not contain any witnesses violating clauses (3)-(6), then the sequences $\langle {x}_{\alpha}: \alpha < \delta \rangle$ and $\langle {p}_{\alpha}: \alpha < \delta \rangle$ do not contain any such witnesses either.
Therefore, fix $\alpha < {\omega}_{1}$ and assume that $\langle {x}_{\xi}: \xi < \alpha \rangle$ and $\langle {p}_{\xi}: \xi < \alpha \rangle$ are given to us.
We only need to worry about finding ${x}_{\alpha}$ and ${p}_{\alpha}$.
First if $\alpha = 0$, then fix a $(\V, \P)$-generic $\U$ with ${p}^{\ast} \in \U$.
In $\V\[\U\]$ fix ${x}_{0} \in \mathring{\BB}\[\U\]$ with ${x}_{0} \subset {p}^{\ast}$.
In $\V$, fix a standard ${p}_{0} \in \P$ such that ${p}_{0} \subset {x}_{0}$ and ${p}_{0} \forces {x}_{0} \in \mathring{\BB}$.
It is clear that (3) is satisfied, and (4)-(6) are trivially true.
So assume $\alpha > 0$.
Let $\{{\xi}_{n}: n \in \omega\}$ enumerate $\alpha$, possibly with repetitions.
For each $n \in \omega$, let ${\zeta}_{n} = \max\{{\xi}_{i}: i \leq n\}$.
Note that for each $i \leq n$, ${p}_{{\zeta}_{n}} \leq {p}_{{\xi}_{i}}$.
So it is possible to find a sequence of elements of $\omega$ $\{{k}_{0} < {k}_{1} < \dotsb \}$ such that for each $n \in \omega$, ${k}_{n} \in \dom({p}_{{\zeta}_{n}})$ and for each $i \leq n$, ${p}_{{\zeta}_{n}}({k}_{n}) \; {\subset}^{\ast} \; {p}_{{\xi}_{i}}({k}_{n})$.
Define $p \subset \omega \times \omega$ as follows.
If $m \notin \{{k}_{0} < {k}_{1} < \dotsb\}$, then $p(m) = 0$.
Suppose $m = {k}_{n}$.
Put $G({\zeta}_{n}, m, \alpha) = \{{p}_{{\zeta}_{n}}(m) \cap {x}_{\xi}(m): \xi \in \alpha \setminus F({\zeta}_{n}, m)\}$.
By (6) $\I(G({\zeta}_{n}, m, \alpha))$ is a proper ideal on ${p}_{{\zeta}_{n}}(m)$.
Since this ideal is countably generated, it is possible to find $p(m) \in {\[{p}_{{\zeta}_{n}}(m)\]}^{\omega}$ such that
\begin{enumerate}
	\item[(7)]
		for all $a \in \I(G({\zeta}_{n}, m, \alpha))$, $\lc p(m) \cap a \rc < \omega$
	\item[(8)]
		for all $a \in \I(G({\zeta}_{n}, m, \alpha))$, $\lc \left( \omega \setminus a \right) \cap \left(\omega \setminus p(m) \right)\rc = \omega$.
\end{enumerate}
Note that $p \in \P$.
Furthermore, note that if $i \in \omega$, then for any $n \geq i$, $p({k}_{n}) \subset {p}_{{\zeta}_{n}}({k}_{n}) \; {\subset}^{\ast} \; {p}_{{\xi}_{i}}({k}_{n})$.
Hence for all $\xi < \alpha$, $p \leq {p}_{\xi}$.
Next, fix $m \in \omega$ and suppose that $(p \; \cap \; {x}_{\xi})(m)$ is infinite for some $\xi < \alpha$.
Then $m = {k}_{n}$ for some (unique) $n$ and $\xi \in F({\zeta}_{n}, m)$.
However $F({\zeta}_{n}, m)$ must be a finite set.
This is because if $\xi \in F({\zeta}_{n}, m)$, then $\xi \leq {\zeta}_{n}$ and ${p}_{{\zeta}_{n}}(m) \; {\subset}^{\ast} \; {x}_{\xi}(m) \cap {x}_{{\zeta}_{n}}(m)$, and so since $m \in \dom({p}_{{\zeta}_{n}})$, $F({\zeta}_{n}, m) \subset \{{\zeta}_{n}\} \cup \{\xi < {\zeta}_{n}: \lc \left( {x}_{{\zeta}_{n}} \cap  {x}_{\xi} \right) (m) \rc = \omega\}$, which is a finite set.
So for any $m \in \omega$, $\{\xi < \alpha: \lc \left(p \cap {x}_{\xi}\right)(m)\rc = \omega\}$ is finite.
Finally, note that $p \subset {p}^{\ast}$.

Let $\U$ be $(\V, \P)$-generic with $p \in \U$.
In $\V\[\U\]$, let ${x}_{\alpha} \in \mathring{\BB}\[\U\]$ with ${x}_{\alpha} \subset p$.
In $\V$, let ${p}_{\alpha} \in \P$ be standard such that ${p}_{\alpha} \subset {x}_{\alpha} \subset p$ and ${p}_{\alpha} \forces {x}_{\alpha} \in \mathring{\BB}$.
It is clear that (3)-(5) are satisfied by $\langle {x}_{\xi}: \xi \leq \alpha \rangle$ and $\langle {p}_{\xi}: \xi \leq \alpha\rangle$.

We only need to check that (6) is satisfied.
There are several cases to consider here.
First fix $m \in \omega$ and suppose that $m \in \dom({p}_{\alpha})$.
As ${p}_{\alpha} \subset p$, $m = {k}_{n}$ for some (unique) $n \in \omega$.
Now if $\xi < \alpha$ and ${p}_{\alpha}(m) \; {\subset}^{\ast} \; {x}_{\xi}(m)$, then $p(m) \cap {x}_{\xi}(m)$ is infinite and so $\xi \in F({\zeta}_{n}, m)$.
On the other hand if $\xi \in F({\zeta}_{n}, m)$, then ${p}_{\alpha}(m) \subset p(m) \subset {p}_{{\zeta}_{n}}(m) \; {\subset}^{\ast} \; {x}_{\xi}(m)$, whence $\xi \in F(\alpha, m)$.
Therefore, $F(\alpha, m) = \{\alpha\} \cup F({\zeta}_{n}, m)$.
Put $G(\alpha, m, \alpha + 1) = \{{p}_{\alpha}(m) \cap {x}_{\xi}(m): \xi \in \left( \alpha + 1 \right) \setminus F(\alpha, m)\}$.
By (7) it is clear that $\I(G(\alpha, m, \alpha + 1))$ is the Frechet ideal on ${p}_{\alpha}(m)$.
This takes care of $\alpha$.
Next, suppose $\xi < \alpha$ and $m \in \dom({p}_{\xi})$.
Put $G(\xi, m, \alpha) = \{{p}_{\xi}(m) \cap {x}_{\zeta}(m): \zeta \in \alpha \setminus F(\xi, m)\}$ and put $G(\xi, m, \alpha + 1) = \{{p}_{\xi}(m) \cap {x}_{\zeta}(m): \zeta \in \left( \alpha + 1\right) \setminus F(\xi, m)\}$.
We know that $\I(G(\xi, m, \alpha))$ is a proper ideal on ${p}_{\xi}(m)$ and it is clear that $\I(G(\xi, m, \alpha)) = \I(G(\xi, m, \alpha + 1))$ unless ${p}_{\xi}(m) \cap {x}_{\alpha}(m) \notin \I(G(\xi, m, \alpha))$.
Suppose this is the case.
In particular, ${p}_{\xi}(m) \cap {x}_{\alpha}(m)$ is infinite.
Since ${x}_{\alpha}(m) \subset p(m)$ and ${p}_{\xi}(m) \subset {x}_{\xi}(m)$, it follows that $m = {k}_{n}$ for some (unique) $n$ and $\xi \in F({\zeta}_{n}, m)$.
Moreover, if $\xi < {\zeta}_{n}$, then since ${\zeta}_{n} \in \alpha \setminus F(\xi, m)$ and since ${x}_{\alpha}(m) \subset p(m) \subset {p}_{{\zeta}_{n}}(m) \subset {x}_{{\zeta}_{n}}(m)$, we have that ${p}_{\xi}(m) \cap {x}_{\alpha}(m) \subset {p}_{\xi}(m) \cap {x}_{{\zeta}_{n}}(m) \in \I(G(\xi, m, \alpha))$.
Therefore, $\xi = {\zeta}_{n}$.
Thus we need to show that $\I(G({\zeta}_{n}, m, \alpha + 1))$ is a proper ideal on ${p}_{{\zeta}_{n}}(m)$.
For this it suffices to show that $\omega \setminus \left( {p}_{{\zeta}_{n}}(m) \cap {x}_{\alpha}(m)\right) \notin \I(G({\zeta}_{n}, m, \alpha))$.
Note that since ${x}_{\alpha}(m) \subset p(m) \subset {p}_{{\zeta}_{n}}(m)$, ${p}_{{\zeta}_{n}}(m) \cap {x}_{\alpha}(m) = {x}_{\alpha}(m)$.
However it is clear from (8) that $\omega \setminus {x}_{\alpha}(m) \notin \I(G({\zeta}_{n}, m, \alpha))$ and we are done.
\end{proof}
Note that our argument does not rely on $\mathring{\BB}$ being closed under finite intersections.
\bibliographystyle{amsplain}
\bibliography{Bibliography}

\def\polhk#1{\setbox0=\hbox{#1}{\ooalign{\hidewidth
  \lower1.5ex\hbox{`}\hidewidth\crcr\unhbox0}}}
\providecommand{\bysame}{\leavevmode\hbox to3em{\hrulefill}\thinspace}
\providecommand{\MR}{\relax\ifhmode\unskip\space\fi MR }
\providecommand{\MRhref}[2]{%
  \href{http://www.ams.org/mathscinet-getitem?mr=#1}{#2}
}
\providecommand{\href}[2]{#2}
\begin{thebibliography}{1}

\bibitem{dt}
N.~Dobrinen and S.~Todor{\v{c}}evi{\'c}, \emph{Tukey types of ultrafilters},
  preprint, to appear.

\bibitem{tukey}
D.~Raghavan and S.~Todor{\v{c}}evi{\'c}, \emph{Cofinal types of ultrafilters},
  Ann. Pure Appl. Logic \textbf{163} (2012), no.~3, 185--199.

\end{thebibliography}
\end{document}